\newtheorem{thm}{Theorem}[section]
\newtheorem{cor}[thm]{Corollary}
\newtheorem{lem}[thm]{Lemma}
\newtheorem{prop}[thm]{Proposition}
\theoremstyle{definition}
\newtheorem{ex}[thm]{Example}
\newtheorem{remk}[thm]{Remark}
\newcommand{\HH}{\mathcal{H}}
\newcommand{\KK}{\mathcal{K}}
\newcommand{\R}{\mathbb{R}}
\newcommand{\C}{\mathbb{C}}
\newcommand{\leqs}{\leqslant}
\newcommand{\geqs}{\geqslant}
\newcommand{\ap}{\alpha}
\newcommand{\bt}{\beta}
\newcommand{\ep}{\epsilon}
\newcommand{\ld }{\lambda}
\newcommand{\sm}{\,\sigma\,}
\newcommand{\hm}{\, ! \,}
\newcommand{\eop}{\;\hfill $\square$}
\begin{document}


\begin{frontmatter}

\title{Characterizations of Connections for Positive Operators}

\author[CU]{Pattrawut Chansangiam} 
\ead{kcpattra@kmitl.ac.th}
\author[CU]{Wicharn Lewkeeratiyutkul \corref{cor1}}
\ead{Wicharn.L@chula.ac.th}

\address[CU]{Department of Mathematics and Computer Science, Faculty of Science,
Chulalongkorn University, Bangkok 10330, Thailand}

\cortext[cor1]{correponding author, Tel. +66 2 2185161}

\begin{abstract}

An axiomatic theory of operator connections and operator means was investigated by Kubo and Ando in 1980.
A connection is a binary operation for positive operators satisfying
the monotonicity, the transformer inequality and the joint-continuity from above.
In this paper, we show that the joint-continuity assumption
can be relaxed to some conditions which are weaker than the separate-continuity.
This provides an easier way for checking whether a given binary opertion is a connection.
Various axiomatic characterizations of connections are obtained.
We show that the concavity is an important property of a connection
by showing that the monotonicity can be replaced by the concavity
or the midpoint concavity. Each operator connection induces a unique scalar connection.
Moreover, there is an affine order isomorphism between connections and induced connections.
This gives a natural viewpoint to define any named means.

\end{abstract}

\begin{keyword}
operator connection \sep operator mean  \sep operator monotone function
\MSC[2010] 47A63 \sep 47A64
\end{keyword}

\end{frontmatter}


\section{Introduction}

Throughout, let $\HH$ denote an infinite-dimensional Hilbert space.
Let $B(\HH)$ be the algebra of bounded linear operators on $\HH$
and $B(\HH)^+$ its positive cone. Equip $B(\HH)$ with the usual positive semidefinite ordering.
Unless otherwise stated, any limit in  $B(\HH)$ is taken in the strong-operator topology.

The concept of means is one of the most familiar concepts in mathematics.
It is proved to be a powerful tool from theoretical as well as practical points of view.
The theory of scalar means was developed since the ancient Greek by the Pythagoreans 
(via the method of proportions, see \cite{Toader-Toader})
until the last century by many famous mathematicians.
The theory of connections and means for matrices and operators started when the concept of
parallel sum was introduced in \cite{Anderson-Duffin} for analyzing electrical networks.
The \emph{parallel sum} of two positive definite matrices (or invertible positive operators) $A$ and $B$ is
defined by
\begin{align*}
    (A,B) \mapsto (A^{-1}+B^{-1})^{-1}.
\end{align*}
For general positive operators, use continuity:
\begin{align*}
    (A,B) \mapsto \lim_{\ep \downarrow 0}(A_{\ep}^{-1}+B_{\ep}^{-1})^{-1},
\quad A_{\ep} \equiv A+\ep I, B_{\ep} \equiv B+\ep I.
\end{align*}
The \emph{harmonic mean}, denoted by $!$, for positive operators is the twice parallel sum.
The \emph{geometric mean} of two positive semidefinite matrices (or positive operators)
is defined and studied in \cite{Ando}:
\begin{align*}
    A \# B = \lim_{\ep \downarrow 0} {A_{\ep}}^{1/2} ({A_{\ep}}^{-1/2}B_{\ep} {A_{\ep}}^{-1/2})^{1/2} {A_{\ep}}^{1/2}
\end{align*}
where $A_{\ep} \equiv A+\ep I, B_{\ep} \equiv B+\ep I$.
In \cite{Ando}, geometric means and harmonic means played crucial roles in the study of concavity and monotonicity 
of many interesting maps between matrix spaces.
Another important mean in mathematics, namely the power mean, was considered in \cite{Bhagwat-Subramanian}.


A study of operator means in an abstract way
was given by Kubo and Ando \cite{Kubo-Ando}. Let $\KK$ be a Hilbert space, here we do not assume that
$\dim \KK = \infty$.
A \emph{connection} is a binary operation $\sm$ on $B(\KK)^+$
such that for all positive operators $A,B,C,D$:
\begin{enumerate}
	\item[(M1)] \emph{monotonicity}: $A \leqs C, B \leqs D \implies A \sm B \leqs C \sm D$
	\item[(M2)] \emph{transformer inequality}: $C(A \sm B)C \leqs (CAC) \sm (CBC)$
	\item[(M3)] \emph{joint--continuity from above}:  for $A_n,B_n \in B(\KK)^+$,
                if $A_n \downarrow A$ and $B_n \downarrow B$,
                 then $A_n \sm B_n \downarrow A \sm B$.
\end{enumerate}
Typical examples of connection are the sum $(A,B) \mapsto A+B$ and the parallel sum.
A \emph{mean} is a connection $\sigma$ such that $I \sm I = I$.
The followings are examples of means in practical usage:  
	\begin{itemize}
		\item	$t$-weighted arithmetic means: $A \triangledown_{t} B = (1-t)A + tB$
		\item	$t$-weighted geometric means: 
    			$A \#_{t} B =  A^{1/2} 
    			({A}^{-1/2} B  {A}^{-1/2})^{t} {A}^{1/2}$
		\item	$t$-weighted harmonic means: $A \,!_t\, B = [(1-t)A^{-1} + tB^{-1}]^{-1}$ 
		\item	logarithmic mean: $(A,B) \mapsto A^{1/2} f (A^{-1/2} B A^{-1/2}) A^{1/2}$ where $f(x) = (x-1)/\log x$.
	\end{itemize}

The theory of operator monotone functions plays a crucial role in Kubo-Ando theory of connections and means.
A continuous real-valued function $f$ on an interval $I$ is called an \emph{operator monotone function} if
one of the following equivalent conditions holds:
\begin{enumerate}
    \item[(i)]   $A \leqs B \implies f(A) \leqs f(B)$ for all Hermitian matrices $A,B$ of all orders whose spectrums are contained in $I$;
    \item[(ii)]   $A \leqs B \implies f(A) \leqs f(B)$ for all Hermitian operators $A,B \in B(\HH)$ whose spectrums are contained in $I$ and for some infinite-dimensional Hilbert space $\HH$;
    \item[(iii)]   $A \leqs B \implies f(A) \leqs f(B)$ for all Hermitian operators $A,B \in B(\KK)$ whose spectrums are contained in $I$ and for all Hilbert spaces $\KK$.
\end{enumerate}
This concept was introduced in \cite{Lowner}; see also \cite{Bhatia,Hiai,Hiai-Yanagi}.
Denote by $OM(\R^+)$ the set of operator monotone functions from $\R^+=[0,\infty)$ to itself.

In \cite{Kubo-Ando}, a connection $\sigma$ on $B(\HH)^+$ can be characterized as follows:
\begin{itemize}
	\item	There is an $f \in OM(\R^+)$ satisfying
				\begin{align}
    			f(x)I = I \sm (xI), \quad x \in \R^+. \label{eq: f(t)I = I sm tI}
				\end{align}
	\item	There is an $f \in OM(\R^+)$ such that
				\begin{align}
						A \sm B = A^{1/2} f(A^{-1/2} B A^{-1/2}) A^{1/2}, \quad A,B >0. \label{eq: A sm B in term of f}
				\end{align}
	\item	There is a finite Borel measure $\mu$ on $[0,\infty]$ such that
				\begin{align}
    			A \sm B = \ap A + \bt B + \int_{(0,\infty)} \frac{\ld+1}{2\ld} \{ (\ld A) \,!\,B\}\, d\mu(\ld)
    			\label{eq: formula of connection}
				\end{align}
				where the integral is taken in the sense of Bochner, $\ap = \mu(\{0\})$ and $\bt = \mu(\{\infty\})$.
\end{itemize}
In fact, the functions $f$ in \eqref{eq: f(t)I = I sm tI} and \eqref{eq: A sm B in term of f} are unique and the same.
We call $f$ the \emph{representing function} of $\sigma$.
From the integral representation \eqref{eq: formula of connection}, every connection $\sigma$ is concave in the sense that
\begin{align}
	(tA+(1-t)B) \sm (tA'+(1-t)B') \geqs t(A \sm A')+(1-t)(B \sm B') \label{eq: concavity of sm}
\end{align}
for all $A,B \geqs 0$ and $t \in (0,1)$.
Moreover, the map $\sm \mapsto f$ is an affine order-isomorphism between the set of connections and $OM(\R^+)$.
Here, the order-isomorphism means that when $\sm_i \mapsto f_i$ for $i=1,2$,
			$A \,\sigma_1\, B \leqs A \,\sigma_2\, B$ for all $A,B \in B(\HH)^+$ if and only if $f_1 (x) \leqs f_2 (x)$ for all $x \in \R^+$.

The mean theoretic approach has various applications.
The concept of means can be used to obtain the monotonicity, concavity and convexity of interesting maps 
between matrix algebras or operator algebras (see the original idea in \cite{Ando}).
The fact that the map $f \mapsto \sigma$ is an order isomorphism can be used to obtain simple proofs of 
operator inequalities concerning means. For example, the arithmetic--geometric--logarithmic--harmonic means
inequalities are obtained from applying this order isomorphism to the scalar inequalities
	\begin{align*}
		\frac{2x}{1+x} \leqs x^{1/2} \leqs \frac{x-1}{\log x} \leqs \frac{1+x}{2}, \quad x > 0, x \neq 1.
	\end{align*}
The concavity of general connections serves simple proofs of operator versions of H\"{o}lder inequality, Cauchy-Schwarz inequality, 
Minkowski's inequality, Aczel's inequality, Popoviciu's inequality and Bellman's inequality (e.g. \cite{Mond}).
The famous Furuta's inequality and its generalizations are obtained from axiomatic properties of connections 
(e.g. \cite{Fujii_Furuta ineq,Fujii-Kamei_Furuta ineq,Ito-Kamei_Furuta ineq}).  
Kubo-Ando theory can be applied to matrix and operator equations since harmonic and geometric means can be viewed as solutions of certain operator equations. See some examples of applications in
\cite{Anderson-Morley_matrix equation, Lim_matrix eq}.
It also plays an important role in noncommutative information theory.
A relative operator entropy was defined in \cite{Fujii_entropy} to be the connection corresponding 
to the operator monotone function $x \mapsto \log x$.
See more information in \cite[Chapter IV]{Bhatia} and therein references.

Kubo-Ando definition of a connection is a binary operation satisfying axioms (M1), (M2) and (M3).
In this work, we show that some of the axioms can be weakened.
Moreover, we provide alternative sets of axioms involving concavity property.
This gives a direct tool for studying operator inequalities.

Consider the following axioms:
\begin{itemize}
    \item[(M3$'$)] for each $A,X \in B(\HH)^+$,  if $A_n \downarrow A$, then
                  $A_n \sm X \downarrow A \sm X$ and $I \sm A_n \downarrow I \sm A$;
    \item[(M3$''$)] for each $A,X \in B(\HH)^+$,  if $A_n \downarrow A$, then
                  $X \sm A_n \downarrow X \sm A$ and $A_n \sm I \downarrow A \sm I$;
    \item[(M4)]   \emph{concavity}: $(tA+(1-t)B) \sm (tA'+(1-t)B') \geqs t(A \sm A')+(1-t)(B \sm B')$
                    for $t \in (0,1)$;
    \item[(M4$'$)]   \emph{midpoint concavity}: $(A+B)/2 \,\sm\, (A'+B')/2 
    					\geqs [(A \sm A')+ (B \sm B')]/2$.
\end{itemize}

Note that condition (M3$'$) is one of the axiomatic properties of abstract solidarity introduced in \cite{Fujii_solidarities}.
The definition of a connection can also be relaxed as follows:

\begin{thm}\label{thm: Improve connection}
    Let $\sigma$ be a binary operation on $B(\HH)^+$. Then the following statements are equivalent:
    \begin{enumerate}
        \item[(i)]   $\sigma$ is a connection;
        \item[(ii)]    $\sigma$ satisfies (M1), (M2) and (M3$'$);
        \item[(iii)]    $\sigma$ satisfies (M1), (M2) and (M3$''$).
    \end{enumerate}
\end{thm}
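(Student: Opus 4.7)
The implications (i)$\Rightarrow$(ii) and (i)$\Rightarrow$(iii) are immediate, since (M3$'$) and (M3$''$) are obtained from (M3) by letting one of the two descending sequences be constant. The substantive direction is (ii)$\Rightarrow$(i); the proof of (iii)$\Rightarrow$(i) is symmetric with the two variables interchanged (equivalently, apply (ii)$\Rightarrow$(i) to the transpose connection $(A,B)\mapsto B\sm A$).

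Assume $\sm$ satisfies (M1), (M2), (M3$'$). The plan is to recover the Kubo--Ando representation \eqref{eq: A sm B in term of f} from the weaker axioms, after which joint continuity from above becomes automatic. First, applying (M2) with an invertible self-adjoint $C$ and again with $C^{-1}$ to the pair $(CAC,CBC)$ upgrades the transformer inequality to the congruence equality $C(A\sm B)C = (CAC)\sm(CBC)$. Setting $C=A^{-1/2}$ on invertible $A$ reduces everything to the one-variable map $h(X):=I\sm X$: one has $A\sm B = A^{1/2}h(A^{-1/2}BA^{-1/2})A^{1/2}$. Applying the congruence equality to a unitary $U$ commuting with $X$ shows $U$ commutes with $h(X)$, so $h(X)\in\{X\}''$, i.e.\ $h(X)$ is a Borel function of $X$. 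Setting $X=xI$ defines $f\colon\R^+\to\R^+$ by $f(x)I=I\sm(xI)$; (M1) on scalar multiples of $I$ gives $f$ monotone, and the second clause of (M3$'$) applied to $x_n I\downarrow xI$ gives $f$ right-continuous on $\R^+$.

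The crux is the identification $h(X)=f(X)$ for every invertible $X>0$. For $X$ with finite spectrum $X=\sum_{i=1}^n \ld_i P_i$, commutation forces $h(X)=\sum a_i P_i$; one isolates $a_i=f(\ld_i)$ by combining the congruence equality from (M2), applied with invertible operators of the form $\sum \gm_i P_i$ that rescale each spectral block to a common scalar, with the homogeneity identity $(\alpha A)\sm(\alpha B)=\alpha(A\sm B)$ (another consequence of the congruence equality with $C=\sqrt{\alpha}\,I$) and the scalar case $f(x)I=I\sm(xI)$. For arbitrary invertible $X>0$, spectral calculus supplies simple operators $X_n\downarrow X$ with eigenvalues descending onto those of $X$; the second clause of (M3$'$) then yields $h(X_n)\downarrow h(X)$, while $h(X_n)=f(X_n)$ by the finite-spectrum case, and right-continuity of $f$ on each spectral block gives $f(X_n)\downarrow f(X)$ strongly. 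Hence $h(X)=f(X)$. The representation $A\sm B = A^{1/2}f(A^{-1/2}BA^{-1/2})A^{1/2}$ thus holds for invertible $A,B$, and monotonicity in $B$ forces $f\in OM(\R^+)$.

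Joint continuity from above follows by extension. For general $A,B\in B(\HH)^+$, (M3$'$) and the representation give $A\sm B = \lim_{\ep\downarrow 0}(A+\ep I)\sm(B+\ep I)$; for sequences $A_n\downarrow A$, $B_n\downarrow B$ one sandwiches $A_n\sm B_n$ between $A\sm B$ (a lower bound by (M1)) and the upper bound read off from the representation applied to $(A_n,B_n)$, passing to the limit via operator-monotone continuity of $f$. I expect the main obstacle to be the finite-spectrum identification $a_i=f(\ld_i)$: unitary commutation alone yields only the \emph{form} $\sum a_i P_i$, and isolating each coefficient as the scalar value $f(\ld_i)$ without recourse to full joint continuity requires carefully chosen non-unitary invertible congruences in (M2) together with homogeneity and the scalar case. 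Once this step is secured, (M3$'$) does precisely the work that the stronger (M3) would have done.
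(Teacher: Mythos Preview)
Your overall plan is right, and you correctly isolate the crux as the identification $I\sm X=f(X)$ on finite-spectrum $X$. But the mechanism you propose for this step breaks down. Axiom (M2) is stated only for positive $C$, so the congruence equality $C(A\sm B)C=(CAC)\sm(CBC)$ you derive holds only for positive invertible $C$; it does \emph{not} give $U(A\sm B)U^{*}=(UAU^{*})\sm(UBU^{*})$ for unitaries. Hence the step ``applying the congruence equality to a unitary $U$ commuting with $X$ shows $U$ commutes with $h(X)$'' is unjustified, and you do not obtain $h(X)\in\{X\}''$, nor even the block form $h(X)=\sum a_iP_i$. Your fallback via congruences $C=\sum\gm_iP_i$ does not rescue the argument either: one gets $C\,h(X)\,C=(C^{2})\sm(C^{2}X)$, and neither side isolates a single spectral block or collapses to the scalar case.

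The paper avoids this by first extracting from (M1) and (M2) the projection property~(P): $P(A\sm B)=(PA)\sm(PB)=(A\sm B)P$ whenever the projection $P$ commutes with $A$ and $B$ (this is the Kubo--Ando argument, which does not require (M3)). With (P) in hand the finite-spectrum case is a direct computation (Lemma~\ref{thm: f(A) = 1 sm a}):
\[
I\sm A=\sum P_i(I\sm A)=\sum(P_i\sm\ld_iP_i)=\sum P_i(I\sm\ld_iI)=\sum f(\ld_i)P_i=f(A).
\]
Two further points differ from your sketch. First, $f$ is shown to be fully continuous, not just right-continuous: left continuity comes from the \emph{first} clause of (M3$'$) together with positive homogeneity (Lemma~\ref{lem: M3'+F implies f is cont}), and continuity is what makes $f(A_n)\to f(A)$ strongly when $A_n\downarrow A$. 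Second, rather than verifying (M3) directly by a sandwich argument---delicate here because (M3$'$) gives right-variable continuity only when the left entry is $I$---the paper sets up a bijection $BO(M1,M2,M3')\leftrightarrow OM(\R^+)$, so that every such $\sigma$ has the integral representation~\eqref{eq: a sm B = int}; axiom (M3) is then inherited from the harmonic mean through that integral.
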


Condition (M3$'$) or (M3$''$) is clearly weaker, and easier to verify, than the joint-continuity assumption (M3)
in Kubo-Ando definition. 

A connection can be axiomatically defined as follows.
Fix the transformer inequality (M2). 
We can freely replace the monotonicity (M1) by the concavity (M4) 
or the mid-point concavity (M4$'$). 
At the same time, we can use (M3$'$) or (M3$''$) instead of the joint-continuity (M3).

\begin{thm}\label{thm: Characterize connection}
    Let $\sigma$ be a binary operation on $B(\HH)^+$ satisfying (M2). Then the following statements are equivalent:
\begin{enumerate}
    \item[(1)]    $\sigma$ is a connection;
    \item[(2)]    $\sigma$  satisfies (M4) and (M3);
    \item[(3)]    $\sigma$  satisfies (M4) and (M3$'$);
    \item[(4)]    $\sigma$  satisfies (M4) and (M3$''$);
    \item[(5)]    $\sigma$  satisfies (M4$'$) and (M3);
    \item[(6)]    $\sigma$  satisfies (M4$'$) and (M3$'$);
    \item[(7)]    $\sigma$  satisfies (M4$'$) and (M3$''$).
\end{enumerate}
\end{thm}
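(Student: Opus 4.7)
The plan is to show that conditions (2)--(7) all funnel down, via Theorem~\ref{thm: Improve connection}, to the Kubo--Ando axioms (M1), (M2), (M3). Since (M3$'$) and (M3$''$) are each weaker than (M3), and since (M4$'$) is weaker than (M4), the main technical content to supply is: \emph{(M2) together with (M4$'$) implies (M1)}. Once this is established, each of (2)--(7) gives (M1), (M2), and one of (M3), (M3$'$), (M3$''$), and Theorem~\ref{thm: Improve connection} finishes the job. The implication from (1) to any of (2)--(7) is free: a Kubo--Ando connection satisfies (M1) and (M3), and by the integral representation \eqref{eq: formula of connection} also the full concavity (M4), which is stronger than (M4$'$).

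The first key observation is that (M2) alone forces \emph{positive homogeneity of degree one}: for any $t>0$, applying (M2) with $C=\sqrt{t}\,I$ gives $t(A\sm B)\leqs (tA)\sm(tB)$, and applying (M2) again with $C=(1/\sqrt{t})\,I$ to the pair $(tA, tB)$ reverses the inequality, so $(tA)\sm(tB)=t(A\sm B)$ for every $A,B\in B(\HH)^+$ and $t>0$.

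The second step converts midpoint concavity into \emph{superadditivity} using this homogeneity. For $A,B,C,D\in B(\HH)^+$, (M4$'$) gives
\begin{equation*}
  \frac{A+C}{2}\,\sm\,\frac{B+D}{2}\;\geqs\;\frac{(A\sm B)+(C\sm D)}{2},
\end{equation*}
and multiplying by $2$ and invoking homogeneity on the left yields
\begin{equation*}
  (A+C)\sm(B+D)\;\geqs\;(A\sm B)+(C\sm D).
\end{equation*}
Monotonicity (M1) is then immediate: if $A\leqs C$ and $B\leqs D$, write $C=A+(C-A)$ and $D=B+(D-B)$ with $C-A,D-B\in B(\HH)^+$; superadditivity gives $C\sm D\geqs (A\sm B)+((C-A)\sm(D-B))$, and the second summand is in $B(\HH)^+$ because $\sm$ maps $B(\HH)^+\times B(\HH)^+$ into $B(\HH)^+$, so it is positive and can be dropped.

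Putting it together: for each of (2)--(7), condition (M4) or (M4$'$) plus (M2) yields (M1) by the argument above, and we then invoke Theorem~\ref{thm: Improve connection} with the continuity condition (M3), (M3$'$), or (M3$''$) supplied by the chosen item. Thus (1) is equivalent to each of (2)--(7). The only step that requires any thought is recognizing that midpoint concavity, not full concavity, is already enough once combined with the homogeneity baked into the transformer inequality; crucially, we never need to promote (M4$'$) to (M4) on the way, which would otherwise force us to work with a one-sided continuity hypothesis that seems a priori too weak.
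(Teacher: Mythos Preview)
Your argument is correct, and it is actually a cleaner route than the one the paper takes.

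The paper does \emph{not} prove (M1) directly from (M2)+(M4$'$). Instead it proceeds as follows: from (M2)+(M4$'$) it extracts superadditivity (the same step you take) but then only uses it to get the \emph{restricted} monotonicity $A\leqs B\Rightarrow I\sm A\leqs I\sm B$ and $A\sm I\leqs B\sm I$ (Lemma~\ref{prop: connection_basic prop}(ii)). From there it separately establishes property (P), builds the representing function $f(x)=1\,\tilde\sigma\,x$, proves $f$ is operator monotone, sets up a bijection $BO(M2,M3',M4')\to OM(\R^+)$, and finally recovers full (M1) and (M3) only at the end by passing them through the integral representation \eqref{eq: a sm B = int}. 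In other words, the paper essentially reruns the machinery of Theorem~\ref{thm: Improve connection} under the weaker hypotheses rather than reducing to it.

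Your observation that the superadditivity argument already yields \emph{full} (M1)---writing $C\sm D=(A+(C-A))\sm(B+(D-B))\geqs (A\sm B)+((C-A)\sm(D-B))\geqs A\sm B$---short-circuits all of that: once (M1) is in hand, Theorem~\ref{thm: Improve connection} applies as a black box and you are done. The paper's longer route does have the incidental by-product of making the structure (property (P), the induced scalar connection, the representing function) visible directly from the concavity-based axioms, which feeds into Section~4; but as a proof of Theorem~\ref{thm: Characterize connection} itself, your reduction is both shorter and more transparent.
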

This theorem gives different viewpoints of Kubo-Ando connections. 
It shows the importance of the concavity property of a connection.
Moreover, it asserts that the concepts of monotonicity and concavity are equivalent under suitable conditions.

Theorem \ref{thm: Improve connection} and Theorem \ref{thm: Characterize connection} 
are established in Sections 2 and 3, respectively.
Each operator connection induces a unique scalar connection on $\R^+$.
Furthermore, there is an affine order isomorphism between connections and induced connections.
This gives a natural way to define any named mean.
For example, a geometric mean on $B(\HH)^+$ is the mean
on $B(\HH)^+$ that corresponds to the usual geometric mean on $\R^+$.
The correspondence between connections and induced connections will be discussed in details in Section 4.


\section{Relaxing the definition of connection}

In this section, we show that the joint-continuity assumption in the definition of connection can
be relaxedly defined by (M3$'$) or (M3$''$), which are weaker than (M3).
Let $\sigma : B(\HH)^+ \times B(\HH)^+ \to B(\HH)^+$ be a binary operation.

\begin{remk} \label{rem: M2 imply Cong inv and Posit hom}
The transformer inequality (M2) implies
\begin{itemize}
    \item   \emph{congruence invariance}: $C(A \sigma B)C = (CAC) \sm (CBC)$ for $A,B \geqs 0$ and $C>0$;
    \item   \emph{positive homogeneity}: $\ap (A \sm B) = (\ap A) \sm (\ap B)$
    				for $A,B \geqs 0$ and $\ap \in (0,\infty)$.
\end{itemize}
\end{remk}

We say that $\sigma$ satisfies property (P) if
                    \begin{align*}
                        P(A \sm B) = (PA) \sm (PB) = (A \sm B)P
                    \end{align*} 
for any projection $P \in B(\HH)^+$ commuting with $A,B \in B(\HH)^+$.
A function $f: \R^+ \to \R^+$ and $\sigma$ are said to satisfy property (F) if for any $x \in \R^+$,
\begin{align*}
	f(x) I  =  I  \sm (x I).  
\end{align*}


\begin{lem} \label{lem: M3'+F implies f is cont}
    Let $f: \R^+ \to \R^+$ be an increasing function. If $\sigma$ satisfies the positive homogeneity, (M3$'$) and (F),
    then $f$ is continuous.
\end{lem}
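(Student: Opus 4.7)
The plan is to exploit that an increasing function on $\R^+$ is continuous as soon as it is both left- and right-continuous, and to verify each one-sided continuity by testing (M3$'$) on suitable scalar-multiple-of-identity sequences, with positive homogeneity providing the bridge in the left-sided case.

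For right-continuity at $x \in \R^+$, take $x_n \downarrow x$. Then $x_n I \downarrow x I$ in $B(\HH)^+$, so the second clause of (M3$'$) gives $I \sm (x_n I) \downarrow I \sm (xI)$, which by (F) reads $f(x_n) I \downarrow f(x) I$, hence $f(x_n) \to f(x)$.

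For left-continuity at $x_0 > 0$, take $x_n \uparrow x_0$ with $x_n > 0$ (which we may assume by discarding finitely many terms). Since $f$ is increasing, $f(x_n) \uparrow L$ for some $L \leqs f(x_0)$, and the task is to show $L = f(x_0)$. The key idea is to convert the left-approach $x_n \uparrow x_0$ into the right-approach $x_0/x_n \downarrow 1$ via positive homogeneity applied with scale factor $x_0/x_n$:
\[
    \bigl((x_0/x_n) I\bigr) \sm \bigl(x_0 I\bigr)
    = (x_0/x_n) \bigl( I \sm (x_n I)\bigr)
    = (x_0/x_n) f(x_n)\, I.
\]
Because $(x_0/x_n) I \downarrow I$, the first clause of (M3$'$) with $X = x_0 I$ forces the left-hand side to decrease to $I \sm (x_0 I) = f(x_0) I$. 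Therefore $(x_0/x_n) f(x_n) \to f(x_0)$, and since $x_0/x_n \to 1$, we conclude $L = f(x_0)$.

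The main obstacle, and really the only conceptual step, is noticing that positive homogeneity lets us repackage the natural left-sided sequence $x_n \uparrow x_0$ as the right-sided sequence $x_0/x_n \downarrow 1$ of operator coefficients, which is the form (M3$'$) can handle. Once that substitution is in hand, everything is a direct unpacking of (F) and the two clauses of (M3$'$). No continuity or monotonicity property of $\sigma$ beyond what (M3$'$) provides is required.
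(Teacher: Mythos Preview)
Your proof is correct and follows essentially the same approach as the paper: right-continuity comes directly from the second clause of (M3$'$), and left-continuity is obtained by rescaling via positive homogeneity to turn an increasing sequence into a decreasing one so that the first clause of (M3$'$) applies. The only cosmetic difference is that the paper scales by $x_n^{-1}$ (arriving at $(x_n^{-1}I)\sm I \to (x_0^{-1}I)\sm I$) whereas you scale by $x_0/x_n$ (arriving at $((x_0/x_n)I)\sm (x_0 I) \to I \sm (x_0 I)$); the underlying idea is identical.
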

\begin{proof}
    To show that $f$ is right continuous at each $x\in\R^+$, consider a sequence $x_n$ in $\R^+$ such that $x_n \downarrow x$. Then by (M3$'$)
\begin{align*}
    f(x_n)  I  =  I  \sm (x_n I)  \downarrow  I  \sm (x I)  = f(x) I ,
\end{align*}
i.e. $f(x_n) \downarrow f(x)$. To show that $f$ is left continuous at each $x>0$, consider a sequence $x_n >0$ such that $x_n$ is increasing and $x_n \to x$. Then $x_n^{-1} \downarrow x^{-1}$ and
\begin{align*}
    \lim x_n^{-1} f(x_n)  I  &= \lim \; x_n^{-1}( I  \sm x_n I )
        = \lim \; (x_n^{-1} I ) \sm   I
        = (x^{-1} I ) \sm   I \\
        &= x^{-1}(I \sm xI)
        = x^{-1} f(x)I
\end{align*}
That is $x \mapsto x^{-1}f(x)$ is left continuous
and so is $f$.
\end{proof}

\begin{lem} \label{thm: f(A) = 1 sm a}
    Let $\sigma$ be a binary operation on $B(\HH)^+$ satisfying (M3$'$) and (P).
    If $f: \R^+ \to \R^+$ is an increasing continuous function where $\sigma$ and $f$ satisfy (F),
    then $f(A) =  I  \sm A$ for any $A \in B(\HH)^+$.
\end{lem}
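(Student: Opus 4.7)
The plan is to establish $f(A) = I \sm A$ in three stages: first for projections, then for simple positive operators (finite linear combinations of mutually orthogonal projections), and finally for arbitrary $A \in B(\HH)^+$ via a monotone approximation from above.

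For the projection case, let $P \in B(\HH)^+$ be a projection and $x \geqs 0$. Then $P$ commutes with $I$ and with $xI$, so property (P) applied to $I \sm (xI)$ yields $P \sm (xP) = (PI) \sm (P(xI)) = P(I \sm xI)$, and (F) rewrites the right-hand side as $P \cdot f(x) I = f(x) P$. Hence $P \sm (xP) = f(x) P$ for every projection $P$ and every $x \geqs 0$.

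For a simple positive operator $A = \sum_{j=0}^n x_j P_j$ with mutually orthogonal $P_j$ summing to $I$ (adjoining $P_0 = I - \sum_{j \geqs 1} P_j$ with $x_0 = 0$ if necessary), each $P_j$ commutes with both $I$ and $A$, since $P_j A = x_j P_j = A P_j$. Applying (P) and the projection case gives
\begin{align*}
P_j (I \sm A) = P_j \sm (x_j P_j) = f(x_j) P_j.
\end{align*}
Summing over $j$ and using $\sum_j P_j = I$ yields $I \sm A = \sum_j f(x_j) P_j = f(A)$, where the last equality is the spectral definition of $f(A)$.

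For general $A \in B(\HH)^+$ with $\|A\| \leqs M$, I would use the spectral resolution of $A$ together with nested dyadic partitions of $[0,M]$ to produce simple operators $A_n$ with $A_n \downarrow A$ in norm (each $A_n$ obtained by rounding the spectrum up to the right endpoint of its partition interval, so that refining the partition forces $A_{n+1} \leqs A_n$). The previous step gives $I \sm A_n = f(A_n)$; (M3$'$) then yields $I \sm A_n \downarrow I \sm A$ in SOT, while norm convergence $A_n \to A$ together with the uniform continuity of $f$ on $[0,M]$ gives $f(A_n) \to f(A)$ in norm, hence in SOT. Identifying the two limits gives $f(A) = I \sm A$. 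The only step requiring any real care is producing an approximating sequence that is simultaneously simple and monotonically decreasing, so that (M3$'$) actually applies; this is standard via spectral-measure truncations on refinement partitions.
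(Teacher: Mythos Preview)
Your proof is correct and follows essentially the same route as the paper: first reduce to simple spectral operators via property (P), then pass to general $A$ by approximating from above with a decreasing sequence of simple operators and invoking (M3$'$) together with the continuity of $f$. The only cosmetic differences are that you isolate the projection identity $P \sm (xP) = f(x)P$ as a preliminary step (the paper absorbs this into the simple-operator computation), and you are more explicit about how the monotone simple approximants are built from dyadic spectral truncations.
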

\begin{proof}
    First consider $A \in B(\HH)^+$ in the form $\sum_{i=1}^m \ld_i P_i$ where $\{P_i\}_{i=1}^m$ is an orthogonal
family of projections with sum $ I $ and $\ld_i>0$ for all $i=1,\dots,m$. Since each $P_i$ commutes with $A$,
we have by the property (P) that
\begin{align*}
     I  \sm A &= \sum {P_i} ( I  \sm A) = \sum P_i \sm P_i A = \sum P_i \sm \ld_i P_i \\
            &= \sum P_i ( I  \sm \ld_i  I ) = \sum f(\ld_i)P_i = f(A).
\end{align*}
Now, consider $A \in B(\HH)^+$. Then there is a sequence $A_n$ of strictly positive operators in the above form
such that $A_n \downarrow A$. Then $ I  \sm A_n \downarrow  I  \sm A$ and $f(A_n)$ converges strongly to $f(A)$. 
Hence, $ I  \sm A = \lim  I  \sm A_n = \lim f(A_n) = f(A)$.
\end{proof}

Denote by $BO(M1, M2, M3')$ the set of binary operations satisfying axioms (M1), (M2) and (M3$'$).
Similar notations are applied for other axioms. 

\begin{proof}[Proof of Theorem \ref{thm: Improve connection}:]
We have known that (i) $\Rightarrow$ (ii) and (i) $\Rightarrow$ (iii).

(ii) $\Rightarrow$ (i).
    Let $\sigma \in BO(M1, M2, M3')$. As in \cite{Kubo-Ando},
    the conditions (M1) and (M2) imply that $\sigma$ satisfies (P) and there is a function $f:\R^+ \to \R^+$
    subject to (F).
    If $0 \leqs x_1 \leqs x_2$, then by (M1)
    \begin{align*}
        f(x_1)I = I \sm (x_1 I) \leqs I \sm (x_2 I) = f(x_2)I,
    \end{align*}
    i.e. $f(x_1) \leqs f(x_2)$.
    Then the assumption (M3$'$) is sufficient to guarantee that $f$ is continuous by
    Lemma \ref{lem: M3'+F implies f is cont}.
Lemma \ref{thm: f(A) = 1 sm a} results in $f(A)=I \sigma A$ for all $A \geqs 0$. 
Now, (M1) and the fact that
$\dim \HH= \infty$ yield that $f$ is operator monotone.
The uniqueness of $f$ is obvious.
Thus, we establish a well-defined map
$\sigma \in BO(M1, M2, M3') \mapsto f \in OM(\R^+)$ such that  $\sigma$ and $f$ satisfy (F).

Now, given $f \in OM(\R^+)$, we construct $\sigma$ as in \cite{Kubo-Ando}:
\begin{align}
    A \sm B = \ap A + \bt B+\int_{(0,\infty)} \frac{\ld+1}{2\ld} \{ (\ld A) \hm B\} d\mu(\ld) \label{eq: a sm B = int}
\end{align}
where $\mu$ is the corresponding measure of $f$, $\ap=\mu(\{0\})$ and $\bt=\mu(\{\infty\})$.
Then $\sigma$ satisfies (M1), (M2), (M3$'$) and (F).
This shows that the map $\sigma \mapsto f$ is surjective.

To show the injectivity of this map, let $\sigma_1,\sigma_2 \in BO(M1,M2,M3')$ be such that $\sigma_i \mapsto f$
where, for each $t \geqs 0$,
\begin{align*}
    I \,\sigma_i\, (xI) = f(x)I, \quad i=1,2.
\end{align*}
Since $\sigma_i$ satisfies the property (P), we have $I \,\sigma_i\, A = f(A)$ for $A \geqs 0$
by Lemma \ref{thm: f(A) = 1 sm a}.
Since $\sigma_i$ satisfies the congruence invariance, we have that for $A>0$ and $B \geqs 0$,
\begin{align*}
    A \,\sigma_i\, B = A^{1/2}(I \,\sigma_i\, A^{-1/2}BA^{-1/2})A^{1/2} = A^{1/2} f(A^{-1/2}BA^{-1/2})A^{1/2}.
\end{align*}
By the limiting argument, we see that (M3$'$) implies $\sigma_1 = \sigma_2$.

Thus there is a bijection between $OM(\R^+)$ and $BO(M1, M2, M3')$. Every element in $BO(M1, M2, M3')$ has an integral representation \eqref{eq: a sm B = int}. Since the harmonic mean possesses (M3), so is any element in
$BO(M1, M2, M3')$.

(iii) $\Rightarrow$ (i). We can develop the analogous results when (M3$'$) is replaced by (M3$''$) by
swapping ``left'' and ``right.'' 
Indeed, we establish a one-to-one correspondence between $\sigma \in BO(M1, M2, M3'')$ and $g \in OM(\R^+)$, where 
\begin{align}
    g(x)I = (xI) \sm I, \quad x \in \R^+. \label{eq: g(x)I = I sm tI}
\end{align}
Here, \eqref{eq: g(x)I = I sm tI} plays the same role as property (F) in the proof of (ii) $\Rightarrow$ (i).
\end{proof}

\begin{remk}
		The representing function of a connection $\sigma$ in Kubo-Ando theory
    can be shown to be the function $f \in OM(\R^+)$ satisfying one of the following equivalent conditions for each $x \in \R^+$:
    \begin{enumerate}
        \item[(i)]   $f(x)I=I \sm (xI)$;
        \item[(ii)]   $f(x)P = P \sm (xP)$ for all projections $P$;
        \item[(iii)]   $f(x)A = A \sm (xA)$ for all $A>0$;
        \item[(iv)]   $f(x)A = A \sm (xA)$ for all $A \geqs 0$.
    \end{enumerate}
    There is also a one-to-one correspondence between connections $\sigma$ and operator monotone functions $g$ on $\R^+$ satisfying 
    \eqref{eq: g(x)I = I sm tI}. 
    Note that $g$ is the representing function of the transpose of $\sigma$.
    Indeed, the relationship between the representing function $f$ and the function $g$ 
    in \eqref{eq: g(x)I = I sm tI} is given by 
    \begin{align*}
        g(x) = xf(1/x). 
    \end{align*}
\end{remk}

\section{Characterizations of connections}

In this section, we give various characterizations of connections.
In order to prove Theorem \ref{thm: Characterize connection}, we need the following lemmas.

\begin{lem}  \label{prop: connection_basic prop}
If $\sigma \in BO(M2,M4')$, then for each $A,B,C,D \geqs 0$,
\begin{enumerate}
    \item[(i)]   $(A \sm B) + (C \sm D) \leqs (A + C) \sm  (B +D)$;
    \item[(ii)]   $A \leqs B$ implies $A \sm  I  \leqs B \sm  I  $ and
                        $ I \sm A \leqs  I \sm B$.
\end{enumerate}
\end{lem}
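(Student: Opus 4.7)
The plan is to prove (i) first, using midpoint concavity (M4$'$) combined with positive homogeneity (which follows from (M2) by Remark~\ref{rem: M2 imply Cong inv and Posit hom}), and then deduce (ii) from (i) together with the fact that $\sigma$ takes values in $B(\HH)^+$.

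For (i), I would apply (M4$'$) directly with the substitution: take the first-slot pair $(A,C)$ and the second-slot pair $(B,D)$. This gives
\begin{align*}
	\tfrac{1}{2}(A+C) \,\sigma\, \tfrac{1}{2}(B+D) \geqs \tfrac{1}{2}\bigl[(A\,\sigma\,B) + (C\,\sigma\,D)\bigr].
\end{align*}
Then I would multiply both sides by $2$ and use positive homogeneity on the left to pull the scalar $2$ inside both arguments of $\sigma$, yielding $(A+C)\,\sigma\,(B+D) \geqs (A\,\sigma\,B) + (C\,\sigma\,D)$, which is (i).

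For (ii), I would write $E = B - A \geqs 0$ and apply the superadditivity from (i) in two different ways. First, with the pairs $(A,I)$ and $(E,0)$:
\begin{align*}
	(A\,\sigma\,I) + (E\,\sigma\,0) \leqs (A+E)\,\sigma\,(I+0) = B\,\sigma\,I.
\end{align*}
Since $\sigma$ maps $B(\HH)^+\times B(\HH)^+$ into $B(\HH)^+$, the term $E\,\sigma\,0$ is positive, and dropping it gives $A\,\sigma\,I \leqs B\,\sigma\,I$. For the second inequality in (ii), the symmetric choice $(I,0)$ and $(A,E)$ in (i) yields $(I\,\sigma\,A) + (0\,\sigma\,E) \leqs I\,\sigma\,B$, and again discarding the positive term $0\,\sigma\,E$ gives $I\,\sigma\,A \leqs I\,\sigma\,B$.

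I do not anticipate a genuine obstacle: the key insight is simply spotting that midpoint concavity plus homogeneity automatically upgrades to full superadditivity in (i), and that a one-sided monotonicity at $I$ is then free from (i) because the leftover cross term $E\,\sigma\,0$ (or $0\,\sigma\,E$) is positive rather than having a sign that has to be analyzed.
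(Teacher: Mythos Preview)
Your proof is correct and matches the paper's argument essentially line for line: the paper also derives (i) from (M4$'$) plus positive homogeneity, and for (ii) writes $I\,\sigma\,B = (I+0)\,\sigma\,(A+(B-A)) \geqs (I\,\sigma\,A) + (0\,\sigma\,(B-A)) \geqs I\,\sigma\,A$, exactly your decomposition with the positive remainder dropped.
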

\begin{proof}
As in Remark \ref{rem: M2 imply Cong inv and Posit hom}, (M2) implies the positive homogeneity.
The fact (i) follows from the midpoint concavity (M4$'$) and positive homogeneity.
If $A \leqs B$, then by (i),
\begin{align*}
     I  \sm B = ( I +0) \sm (A+B-A) \geqs ( I  \sm A) + (0 \sm (B-A)) \geqs  I  \sm A.
\end{align*}
Similarly, $B \sm  I  \geqs A \sm  I $.
\end{proof}

\begin{lem}\label{lem: p commutes with A,B}
    If $\sigma \in BO(M2,M4')$, then $\sigma$ satisfies (P).
\end{lem}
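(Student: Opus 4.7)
The strategy is to prove that $A \sm B$ commutes with $P$ and that $P(A \sm B)P = (PA) \sm (PB)$; together these immediately yield property (P). Write $Q = I - P$; then $Q$ also commutes with $A$ and $B$, so $A = PA + QA$ and $B = PB + QB$ decompose orthogonally with $PAQ = QAP = 0$ and likewise for $B$.

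First I would establish the identity $P(A \sm B)P = (PA) \sm (PB)$ (and, by symmetry, the analogous statement for $Q$). One inequality drops out of (M2) applied with $C = P$: commutativity gives $PAP = PA$ and $PBP = PB$, so $P(A \sm B)P \leqs (PA) \sm (PB)$. For the reverse direction, $P$ is not invertible, so I would approximate it by the strictly positive operators $C_\ep = P + \ep Q$ for $\ep \in (0,1)$, which still commute with $A$ and $B$. The congruence invariance in Remark \ref{rem: M2 imply Cong inv and Posit hom} (valid because $C_\ep > 0$) then gives
\[
C_\ep (A \sm B) C_\ep \;=\; (PA + \ep^2 QA) \sm (PB + \ep^2 QB),
\]
and Lemma \ref{prop: connection_basic prop}(i) combined with positive homogeneity yields
\[
(PA) \sm (PB) + \ep^2 \bigl[(QA) \sm (QB)\bigr] \;\leqs\; C_\ep (A \sm B) C_\ep .
\]
Taking norm-limits as $\ep \to 0^+$ on both sides produces $(PA) \sm (PB) \leqs P(A \sm B)P$, completing the identification.

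Next I would rule out the off-diagonal part of $A \sm B$. A further application of Lemma \ref{prop: connection_basic prop}(i) to the decompositions $A = PA + QA$ and $B = PB + QB$ gives
\[
(PA) \sm (PB) + (QA) \sm (QB) \;\leqs\; A \sm B,
\]
which by the previous step reads $P(A\sm B)P + Q(A\sm B)Q \leqs A \sm B$. Expanding $A \sm B = (P+Q)(A \sm B)(P+Q)$ and subtracting, the self-adjoint off-diagonal operator $T = P(A \sm B)Q + Q(A \sm B)P$ is positive. Relative to the orthogonal decomposition $\HH = P\HH \oplus Q\HH$, $T$ is anti-diagonal with off-diagonal entry $X = P(A \sm B)Q$, and a standard computation with $\langle T\xi,\xi\rangle$ (testing against $\xi$ with first component $-Xv$ and second component $v$) forces $X = 0$. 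Hence $A \sm B$ commutes with $P$, and so $P(A \sm B) = P(A \sm B)P = (PA) \sm (PB) = (A \sm B)P$, which is property (P).

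The main obstacle is the reverse inequality for $P(A \sm B)P$: because this lemma carries no continuity axiom on $\sm$, the approximation $C_\ep \to P$ cannot be pushed inside the operation, and the limit must be performed only after congruence invariance has been used to express $C_\ep (A \sm B) C_\ep$ explicitly. The off-diagonal-block rigidity is elementary but structurally indispensable, as it is the mechanism by which $A \sm B$ acquires the block-diagonal structure needed for (P).
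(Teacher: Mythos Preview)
Your argument is correct, but it takes a longer path than the paper's. The paper observes the single chain
\[
A \sm B \;\geqs\; (PAP)\sm(PBP) + (QAQ)\sm(QBQ) \;\geqs\; P(A\sm B)P + Q(A\sm B)Q,
\]
where the first inequality is Lemma~\ref{prop: connection_basic prop}(i) and the second is (M2) applied separately with $C=P$ and $C=Q$. The difference $C = A\sm B - P(A\sm B)P - Q(A\sm B)Q$ is then positive with $PCP = QCQ = 0$, forcing $C^{1/2}P = C^{1/2}Q = 0$ and hence $C=0$; this simultaneously gives commutation with $P$ \emph{and} collapses the entire chain to equalities, so $P(A\sm B)P = (PAP)\sm(PBP)$ drops out for free. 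Your $C_\ep$ approximation establishing $(PA)\sm(PB) \leqs P(A\sm B)P$ is perfectly valid and nicely sidesteps the absence of any continuity hypothesis, but it is not needed: the sandwich already contains both inequalities, and once the outer gap closes the inner one must too. In effect you prove the block-diagonal identity first and then use it to kill the off-diagonal, whereas the paper kills the off-diagonal first and reads the identity off as a by-product.
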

\begin{proof}
    Let $P$ be a projection such that $AP=PA$ and $BP=PB$. We have $A = PAP+( I -P)A( I -P)$ and
    $B = PBP+( I -P)B( I -P)$. Then by Lemma \ref{prop: connection_basic prop} (i) and (M2)
\begin{align}
    A \sm B  &\geqs (PAP \sm PBP) + (( I -P) A ( I -P) \sm ( I -P) B ( I -P)) \label{eq: lem proj 1}\\
        &\geqs P(A \sm B)P+( I -P)(A \sm B)( I -P). \label{eq: lem proj 2}
\end{align}
Consider $C = A \sm B - P(A \sm B)P - ( I -P)(A \sm B)( I -P)$.
Then $C$ is positive and $PCP=0=( I -P)C( I -P)$, which implies $C^{1/2}P=0=C^{1/2}(I-P)$.
Hence, $CP=0=C(I -P)$ and $C=0$, meaning that
\begin{align*}
    A \sm B = P(A \sm B)P + ( I -P)(A \sm B)( I -P).
\end{align*}
It follows that $P(A \sm B) = P(A \sm B)P = (A \sm B)P$.
Furthermore, inequalities \eqref{eq: lem proj 1} and \eqref{eq: lem proj 2} become equalities, 
which implies $P(A \sm B)P = (PAP) \sm (PBP) = (PA) \sm (PB)$.
\end{proof}

\begin{lem} \label{lem: s sm t}
    If $\sigma \in BO(M2, M4')$, then there exists a unique
    binary operation $\tilde{\sigma}$ on $\R^+$ subject to the same properties and
    \begin{equation}
        (x  I ) \sm (y  I ) = (x \,\tilde{\sigma}\, y) I, \quad x,y \in \R^+. \label{eq: sI sm tI =}
    \end{equation}
\end{lem}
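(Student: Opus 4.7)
The key step is to verify that $(xI) \sm (yI)$ is always a scalar multiple of $I$; once this is done, the scalar defines $x \,\tilde\sigma\, y$ and the desired properties will transfer from $\sigma$ to $\tilde\sigma$ essentially for free.

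First, I would fix $x,y \in \R^+$ and show that $(xI) \sm (yI) = cI$ for some $c \geqs 0$. Every projection $P \in B(\HH)$ trivially commutes with $xI$ and $yI$, so Lemma \ref{lem: p commutes with A,B} (which says that $\sigma \in BO(M2,M4')$ satisfies (P)) yields $P\bigl((xI) \sm (yI)\bigr) = \bigl((xI) \sm (yI)\bigr)P$ for every projection $P \in B(\HH)$. Since the projections generate $B(\HH)$ as a von Neumann algebra, the commutant of all projections is the center $\C I$, and hence $(xI) \sm (yI) \in \C I$. Positivity forces the scalar to be nonnegative, and I denote it by $x \,\tilde\sigma\, y$. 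The identity \eqref{eq: sI sm tI =} then holds by construction, and uniqueness of $\tilde\sigma$ is immediate: any other scalar operation satisfying \eqref{eq: sI sm tI =} must produce the same number.

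Next, I would verify that $\tilde\sigma$ inherits (M2) and (M4') from $\sigma$. For (M2), given $x,y,z \in \R^+$, applying (M2) of $\sigma$ with $A = xI$, $B = yI$, $C = zI$ gives
\begin{align*}
z^2(x \,\tilde\sigma\, y)\,I \;=\; (zI)\bigl((xI) \sm (yI)\bigr)(zI) \;\leqs\; (z^2 x I) \sm (z^2 y I) \;=\; \bigl((z^2 x) \,\tilde\sigma\, (z^2 y)\bigr) I,
\end{align*}
which reads off as the scalar transformer inequality $z(x \,\tilde\sigma\, y)z \leqs (zxz) \,\tilde\sigma\, (zyz)$. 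For (M4'), applying midpoint concavity of $\sigma$ to $x_1 I, x_2 I, y_1 I, y_2 I$ and reading off the scalar coefficients of $I$ on both sides yields the midpoint concavity of $\tilde\sigma$.

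The main obstacle is really just the first step: showing that $(xI) \sm (yI)$ is a scalar operator. The transformer inequality alone is not enough for this, because (M2) only gives an inequality; I genuinely need the equality in property (P), and hence the earlier Lemma \ref{lem: p commutes with A,B}. Once (P) is available, the reduction to the center of $B(\HH)$ is standard, and the remaining verifications are formal manipulations that mirror on $\R^+$ the corresponding axioms on $B(\HH)^+$.
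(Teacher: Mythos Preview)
Your proposal is correct and follows essentially the same approach as the paper: use Lemma~\ref{lem: p commutes with A,B} to see that $(xI)\sm(yI)$ commutes with every projection, hence lies in $\C I$ (the paper phrases this via the spectral theorem), and then verify (M2), (M4$'$) for $\tilde\sigma$ by the routine transfer you describe.
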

\begin{proof}
    Note that any projection on $\HH$ commutes with $x  I $ and $y  I $ for any $x,y \in \R^+$.
By Lemma \ref{lem: p commutes with A,B}, $(xI)  \sm (yI) $ commutes with any projection in $B(\HH)$.
The spectral theorem implies that $(xI)  \sm (yI) $ is a nonnegative multiple of identity, i.e.
there exists a $k \in \R^+$ such that $(x  I ) \sm (y  I ) = k I $.
If there is a $k' \in \R^+$ such that $(x  I ) \sm (y  I ) = k' I $, then $k'=k$.
Hence, each connection $\sigma$ on $B(\HH)^+$ induces a unique binary operation 
$\tilde{\sigma}: \R^+ \times \R^+ \to \R^+$ satisfying \eqref{eq: sI sm tI =}.
It is routine to check that $\tilde{\sigma}$ satisfies (M2) and (M4$'$).
\end{proof}

\begin{prop}  \label{thm: connection gives operator mon}
	If $\sigma \in BO(M2, M3',M4')$,
then there exists a unique $f \in OM(\R^+)$ satisfying (F).
In fact, $f(x) = 1 \tilde{\sm} x$ for $x \in \R^+$.
\end{prop}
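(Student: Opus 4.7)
The plan is to construct $f$ directly from the scalar operation produced by Lemma~\ref{lem: s sm t}: set $f(x) := 1 \,\tilde{\sigma}\, x$. Then $f(x)I = (1\,\tilde{\sigma}\,x)I = I \sm (xI)$, so property (F) is automatic. Uniqueness is also immediate, since any $f$ satisfying (F) is pointwise determined by stripping the identity from both sides of $f(x)I = I \sm(xI)$.

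The real work is showing $f \in OM(\R^+)$. First I would verify that $f$ is increasing and continuous on $\R^+$. Monotonicity drops out of Lemma~\ref{prop: connection_basic prop}(ii) applied to the pair $A = xI$, $B = yI$ with $x \leqs y$, which gives $f(x)I \leqs f(y)I$. For continuity I would appeal to Lemma~\ref{lem: M3'+F implies f is cont}, whose hypotheses---positive homogeneity (from Remark~\ref{rem: M2 imply Cong inv and Posit hom}), (M3$'$), and (F)---are now all in hand, so $f$ is continuous.

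To upgrade from scalar monotonicity to operator monotonicity, I would invoke Lemma~\ref{thm: f(A) = 1 sm a}, whose standing hypothesis is property (P). That is precisely what Lemma~\ref{lem: p commutes with A,B} supplies from (M2) and (M4$'$). The lemma then yields $f(A) = I \sm A$ for every $A \in B(\HH)^+$, and Lemma~\ref{prop: connection_basic prop}(ii) applied at the operator level gives $f(A) \leqs f(B)$ whenever $A \leqs B$. Since $\HH$ is the fixed infinite-dimensional Hilbert space from the introduction, characterization (ii) in the definition of operator monotone functions applies and $f \in OM(\R^+)$.

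The main obstacle I anticipate is not any single computation but the logistical one of making sure (P) is available before appealing to Lemma~\ref{thm: f(A) = 1 sm a}: without the monotonicity axiom (M1), (P) has to be squeezed out of the combination (M2) $+$ (M4$'$) via Lemma~\ref{lem: p commutes with A,B}, and this is the pivot that allows the scalar identity (F) to be promoted to the operator identity $f(A) = I \sm A$ on which operator monotonicity of $f$ ultimately rests.
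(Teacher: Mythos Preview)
Your proposal is correct and follows essentially the same route as the paper: define $f(x)=1\,\tilde{\sigma}\,x$ via Lemma~\ref{lem: s sm t}, get scalar monotonicity from Lemma~\ref{prop: connection_basic prop}(ii), continuity from Lemma~\ref{lem: M3'+F implies f is cont}, then use Lemma~\ref{thm: f(A) = 1 sm a} (with (P) supplied by Lemma~\ref{lem: p commutes with A,B}) to obtain $f(A)=I\sm A$ and deduce operator monotonicity from Lemma~\ref{prop: connection_basic prop}(ii) together with $\dim\HH=\infty$. Your explicit flagging of (P) as the pivot is exactly right and is left implicit in the paper's argument.
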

\begin{proof}
    Define $f: \R^+ \to \R^+$ by $x \mapsto 1 \tilde{\sm} x$. The function $f$ is well-defined, unique and
satisfying  (F) by Lemma \ref{lem: s sm t}. If $0 \leqs x_1 \leqs x_2$,
then Lemma \ref{prop: connection_basic prop} (ii) implies
\begin{align*}
    f(x_1)I = I \sm (x_1 I) \leqs I \sm (x_2 I) = f(x_2)I,
\end{align*}
i.e. $f(x_1) \leqs f(x_2)$.
The continuity of $f$ is assured by Lemma \ref{lem: M3'+F implies f is cont}.
Then Lemma \ref{thm: f(A) = 1 sm a} implies $f(A) = I \sigma A$ for all $A \geqs 0$.
If $A,B \in B(\HH)^+$ are such that $A \leqs B$, then $f(A) = I \sm A \leqs  I  \sm B = f(B)$, again by
Lemma \ref{prop: connection_basic prop} (ii).
Since $\HH$ is infinite dimensional, $f$ is operator monotone.
\end{proof}

\noindent \emph{Proof of Theorem \ref{thm: Characterize connection}:}
    We have known that (1) implies (2)-(7). 
    It suffices to show that (6) implies (1).
    Assume that $\sigma \in BO(M2, M3',M4')$.
    Our aim is to construct a bijection
    between $BO(M2, M3',M4')$ and $OM(\R^+)$.
    Proposition \ref{thm: connection gives operator mon} assures that the map
    $\sigma \in BO(M2, M3',M4') \mapsto f \in OM(\R^+)$ where $\sigma$ and $f$ satisfy the property (F) is well-defined.
    This map is surjective via the same method as the construction in Theorem \ref{thm: Improve connection}.
    The injectivity of this map can be proved by using the same argument as in the proof of 
    Theorem \ref{thm: Improve connection}. 
    Here, the property (P) of
    $\sigma \in BO(M2, M3',M4')$ is fulfilled by
    Lemma \ref{lem: p commutes with A,B}. 
    Hence, we are allowed to consider only the binary operations constructed from operator monotone functions on $\R^+$. 
    Thus, $\sigma$ takes the form \eqref{eq: a sm B = int}. By passing the properties (M1) and (M3) of the harmonic mean through the integral representation, $\sigma$ also satisfies those properties.
\eop

\section{Induced connections} 

In this section, we consider the relationship between connections and their induced connections.

Each connection $\sigma$ on $B(\HH)^+$ induces a unique connection $\tilde{\sigma}$
on $\R^+=B(\C)^+$ satisfying
\begin{align*}
    (x \tilde{\sm} y)I = (xI) \sm (yI), \quad x,y \in \R^+.
\end{align*}
We call $\tilde{\sm}$ the \emph{induced connection} of $\sigma$.
Using the positive homogeneity of $\sigma$ and Lemma \ref{thm: f(A) = 1 sm a}, we have
\begin{align}
	x \tilde{\sm} y = xf(y/x) = xf(y/x), \quad x,y>0. \label{prop: induced con_prop}
\end{align}

\begin{prop} \label{cor: operator mon and formula of induc con}
    Each connection $\sm$ on $B(\HH)^+$  gives rise to an operator monotone function
    $x \mapsto 1 \tilde{\sm} x$ on $\R^+$.
    Moreover, any operator monotone function on $\R^+$ arises in this form.
\end{prop}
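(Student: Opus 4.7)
The plan is to identify the function $x \mapsto 1\,\tilde{\sm}\,x$ with the Kubo--Ando representing function of $\sigma$, and then simply invoke the correspondence between connections and operator monotone functions already developed in the preceding sections.

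For the first assertion, I would start from the defining relation of the induced connection: for any $x \in \R^+$,
\begin{align*}
(1\,\tilde{\sm}\,x)\,I \;=\; (1\cdot I)\,\sm\,(xI) \;=\; I\,\sm\,(xI).
\end{align*}
By property (F) established via Proposition \ref{thm: connection gives operator mon} (equivalently, \eqref{eq: f(t)I = I sm tI} in the Kubo--Ando characterization), the right-hand side equals $f(x)I$ where $f$ is the representing function of $\sigma$. Thus $x \mapsto 1\,\tilde{\sm}\,x$ coincides with $f$, and Proposition \ref{thm: connection gives operator mon} guarantees $f \in OM(\R^+)$.

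For the converse, I would appeal to the surjectivity half of Theorem \ref{thm: Improve connection}. Given $f \in OM(\R^+)$, the integral representation \eqref{eq: formula of connection} (with $\mu$ the measure associated to $f$) produces a connection $\sigma$ on $B(\HH)^+$ satisfying property (F) with this particular $f$. Applying the first part of the present proposition then yields $1\,\tilde{\sm}\,x = f(x)$ for all $x \in \R^+$, which is exactly the required representation.

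The main obstacle is essentially zero: the whole argument is a bookkeeping exercise showing that the induced-connection notation $1\,\tilde{\sm}\,x$ and the representing-function notation $f(x)$ describe the same object. The only subtle point is that this identification requires property (F), which for connections defined merely by axioms (M2), (M3$'$), (M4$'$) was proved in Proposition \ref{thm: connection gives operator mon}; so one must invoke that proposition rather than take the identification as tautological.
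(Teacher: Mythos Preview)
Your proposal is correct and follows essentially the same route as the paper: both arguments invoke Proposition~\ref{thm: connection gives operator mon} to identify $x\mapsto 1\,\tilde{\sm}\,x$ with the representing function $f$ of $\sigma$, and then appeal to the established bijection between connections and $OM(\R^+)$ for the surjectivity. Your write-up is in fact more explicit than the paper's terse version, which merely points to the correspondence and to Proposition~\ref{thm: connection gives operator mon} without spelling out the computation $(1\,\tilde{\sm}\,x)I = I\,\sm\,(xI) = f(x)I$.
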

\begin{proof}
    The correspondence between connections on $B(\HH)^+$ and operator monotone functions on $\R^+$
    allows us to consider only operator monotone functions on $\R^+$ constructing from connections on $B(\HH)^+$.
    Proposition \ref{thm: connection gives operator mon} shows that these operator monotone functions take
    the form $x \mapsto 1 \tilde{\sm} x$.
\end{proof}

\begin{prop} \label{prop: extend conn from center}
    Each binary operation $\sigma$ on the center
    \begin{align*}
        \R^+I = \{kI: k \in \R^+\}
    \end{align*}
    of $B(\HH)^+$ can be uniquely extended to a connection on $B(\HH)^+$.
\end{prop}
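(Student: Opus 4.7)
The plan is to reduce the proposition to Proposition~\ref{cor: operator mon and formula of induc con}, which establishes a bijection between connections on $B(\HH)^+$ and operator monotone functions on $\R^+$. Given $\sigma$ on $\R^+ I$, I would first define $f : \R^+ \to \R^+$ by $f(x) I = I \sigma (xI)$; this is well-posed because the output lies in $\R^+ I$ and is therefore a scalar multiple of $I$. Positive homogeneity, coming from the restriction of (M2) to the center, then yields $\sigma(xI, yI) = x f(y/x) I$ for $x, y > 0$, so $\sigma$ on the center is completely encoded by the scalar function $f$.

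The crux is verifying that $f \in OM(\R^+)$. The center-level axioms give scalar monotonicity and right-continuity of $f$, and to boost this to operator monotonicity I would invoke the machinery of the previous section: by Proposition~\ref{thm: connection gives operator mon}, the representing function of any binary operation on $B(\HH)^+$ satisfying (M2), (M3$'$), and (M4$'$) is automatically operator monotone. This step is the main obstacle, since operator monotonicity is strictly stronger than scalar monotonicity, so one must check that the hypotheses on $\sigma$ on $\R^+ I$ are strong enough to capture Löwner-type behavior --- intuitively, this requires that testing $\sigma$ by congruence with positive $C$ outside the center (through the transformer inequality) be allowed, which rules out non-operator-monotone $f$.

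Once $f \in OM(\R^+)$ has been identified, Proposition~\ref{cor: operator mon and formula of induc con} supplies a unique connection $\bar{\sigma}$ on $B(\HH)^+$ whose representing function is $f$. Evaluating the Kubo-Ando formula \eqref{eq: A sm B in term of f} at $A = xI$ and $B = yI$ yields $\bar{\sigma}(xI, yI) = x f(y/x) I$ for $x, y > 0$, which matches $\sigma(xI, yI)$; the degenerate cases $x = 0$ or $y = 0$ are recovered by monotone limits via (M3$'$). Thus $\bar{\sigma}$ indeed restricts to $\sigma$ on $\R^+ I$.

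Uniqueness is the easy half: if two connections on $B(\HH)^+$ both restrict to $\sigma$ on $\R^+ I$, then their representing functions agree (both are determined by $f(x) I = I \sigma(xI)$), so Proposition~\ref{cor: operator mon and formula of induc con} forces them to coincide.
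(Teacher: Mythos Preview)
Your uniqueness argument (the ``easy half'') is exactly what the paper does, and it is the \emph{only} thing the paper's proof does: given two connection extensions $\tau,\eta$ of $\sigma$, their representing functions satisfy $f(x)I = I\,\tau\,(xI) = I\,\sigma\,(xI) = I\,\eta\,(xI) = g(x)I$, hence $f=g$ and $\tau=\eta$. The paper does not address existence at all.

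Your attempt at existence, by contrast, has a genuine gap at precisely the step you yourself flag as ``the main obstacle.'' Proposition~\ref{thm: connection gives operator mon} requires a binary operation defined on all of $B(\HH)^+$; here $\sigma$ lives only on the center $\R^+ I$, so that proposition does not apply. The suggested workaround---testing $\sigma$ via the transformer inequality with $C$ outside the center---is unavailable, because $CAC$ and $CBC$ then leave the center and $\sigma$ is simply undefined there. In fact the gap cannot be closed: the axioms (M1)--(M3) restricted to $\R^+ I$ force only that $f$ be continuous, increasing, with $x\mapsto f(x)/x$ decreasing, and there are many such $f$ (for instance non-analytic ones) that fail to be operator monotone. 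For those $f$ no connection on $B(\HH)^+$ can extend $\sigma$. So the proposition should really be read as a uniqueness assertion, and that is exactly what the paper proves.
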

\begin{proof}
    Let $\tau, \eta$ be two connections on $B(\HH)^+$ which are extensions of $\sigma$. Let $f,g$ be representing functions of $\tau, \eta$, respectively. Then for $x \geqs 0$,
    \begin{align*}
        f(x)I = I \,\tau\, (xI) = I \sm (xI) =I \,\eta\, (xI)  = g(x)I,
    \end{align*}
    i.e. $f=g$. Hence, $\tau = \eta$.
\end{proof}


\begin{thm} \label{thm: con and ind con}
    The map $\sm \mapsto \tilde{\sm}$ from the set of connections on $B(\HH)^+$ to the set of connections on $\R^+$ such that
    \begin{align}
        (x \tilde{\sm} y)I = (xI) \sm (yI), \quad x,y \in \R^+, \label{eq: con and ind con}
    \end{align}
    is an affine order isomorphism. 
\end{thm}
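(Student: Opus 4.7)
The plan is to factor the map $\sigma \mapsto \tilde{\sigma}$ through $OM(\R^+)$, exploiting the representing-function bijections already established. Write $\Phi$ for the map sending a connection $\sigma$ on $B(\HH)^+$ to its representing function $f$ determined by $f(x)I = I \sm (xI)$, and $\Psi$ for the analogous map sending a connection $\tau$ on $\R^+ = B(\C)^+$ to $g(x) = 1 \,\tau\, x$. By Theorem \ref{thm: Improve connection} together with the Kubo-Ando representation quoted in the introduction (valid for any Hilbert space), and by Proposition \ref{cor: operator mon and formula of induc con}, both $\Phi$ and $\Psi$ are bijections onto $OM(\R^+)$. Setting $x = 1$ in \eqref{eq: con and ind con} gives $(1 \,\tilde{\sigma}\, y)I = I \sm (yI)$, i.e., $\Psi(\tilde{\sigma}) = \Phi(\sigma)$; hence $\sigma \mapsto \tilde{\sigma}$ coincides with $\Psi^{-1} \circ \Phi$ and is automatically a bijection.

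For affinity, the defining identities $f(x) I = I \sm (xI)$ and $g(x) = 1 \,\tau\, x$ are linear in $\sigma$ and $\tau$ respectively, so both $\Phi$ and $\Psi$ (and thus $\Psi^{-1}$) are affine; therefore the composition $\Psi^{-1} \circ \Phi$ is affine. For order preservation, the paragraph following \eqref{eq: concavity of sm} records that $\Phi$ is an order isomorphism, and the identical argument (applied with $\KK = \C$) yields the same for $\Psi$. The composition then both preserves and reflects the order, completing the proof that $\sigma \mapsto \tilde{\sigma}$ is an affine order isomorphism.

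Essentially no new obstacle remains: after one observes the identity $\Psi(\tilde{\sigma}) = \Phi(\sigma)$, the theorem is a formal consequence of the two bijections already in hand. The only point that deserves a moment's thought is verifying that $\Psi$ really is a bijection onto $OM(\R^+)$, i.e., that every connection on $\R^+$ has an operator monotone representing function (rather than merely a monotone one). This is furnished by the dimension-free form of the Kubo-Ando representation and is reinforced, from the opposite direction, by Proposition \ref{cor: operator mon and formula of induc con}, which exhibits every $f \in OM(\R^+)$ as an induced representing function.
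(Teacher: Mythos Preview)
Your route differs from the paper's and is in some ways cleaner: you factor $\sigma\mapsto\tilde\sigma$ as $\Psi^{-1}\circ\Phi$ through the two representing-function bijections, whereas the paper argues surjectivity by extending a given scalar connection from the center $\R^+I$ via Proposition~\ref{prop: extend conn from center} and handles injectivity by comparing representing functions directly. Your factorization makes the affine and order-preserving claims immediate, since both $\Phi$ and $\Psi$ visibly respect those structures.

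You correctly isolate the one delicate point---whether $\Psi$ lands in $OM(\R^+)$, i.e.\ whether every binary operation on $\R^+=B(\C)^+$ satisfying (M1)--(M3) has an \emph{operator} monotone representing function---but the resolution you offer does not hold up. There is no ``dimension-free'' Kubo--Ando representation supplied anywhere in the paper; on the contrary, the proof of Theorem~\ref{thm: Improve connection} explicitly invokes $\dim\HH=\infty$ to obtain operator monotonicity. And the claim is in fact false: $f(x)=1-e^{-x}$ is increasing with $t\mapsto f(t)/t$ decreasing, so $a\,\sigma\,b=a\bigl(1-e^{-b/a}\bigr)$ (extended to $a=0$ by continuity) satisfies (M1)--(M3) on $\R^+$, yet $f$ is not operator monotone since its analytic continuation fails to preserve the upper half-plane. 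Hence $\sigma\mapsto\tilde\sigma$ is not onto the full set of scalar connections in the axiomatic sense. The paper's own proof shares precisely this gap---Proposition~\ref{prop: extend conn from center} is invoked for existence of the extension, but its proof establishes only uniqueness---so your argument is no less rigorous than the paper's; the defect lies in the stated codomain of the theorem rather than in your reduction.
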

\begin{proof}
    To show that this map is surjective, let $\eta$ be a connection on $\R^+$.
    Define a binary operation $\sigma$ on the center $\R^+I$ of $B(\HH)^+$ by
    \begin{align*}
        (xI) \sm (yI) = (x \,\eta\, y)I, \quad x,y \in \R^+.
    \end{align*}
    Extend it to a connection on $B(\HH)^+$ by Proposition \ref{prop: extend conn from center}.

    Now, suppose $\sigma_i \mapsto \sigma$ for $i=1,2$. Let $f_i$ be the representing function of $\sigma_i$ for $i=1,2$. 
    Then for $x \in \R^+$
    \begin{align*}
        f_1(x)I = I \,\sigma_1\, (xI) = (1 \,\eta\, x)I = I \,\sigma_2\, (xI) = f_2(x)I,
    \end{align*}
    i.e. $f_1=f_2$ and $\sigma_1 = \sigma_2$.
    It is straightforward to check that this map is affine (i.e. it preserves nonnegative linear combinations) and order-preserving.
\end{proof}

\begin{cor} \label{prop: con and ind con have same formula}
    A connection on $B(\HH)^+$ and its induced connection have the same representing function and the same representing measure. 
    More precisely, given an operator monotone function
\begin{align}
		f(x) = \ap + \bt x+ \int_{(0,\infty)} \frac{\ld+1}{2\ld} (\ld \,!\, x) \, d\mu(\ld), \label{eq: formula of f}
\end{align}
one has, for each $A,B \in B(\HH)^+$ and $x,y \in \R^+$,
\begin{align}
	A \sm B &= \ap A+ \bt B+ \int_{(0,\infty)} \frac{ \ld+1}{2\ld} (\ld A \,!\, B) \, d\mu(\ld),
            \label{eq: formula of connec} \\  				
    x \tilde{\sm} y &= \ap x + \bt y+ \int_{(0,\infty)} \frac{ \ld+1}{2\ld} (\ld x \,!\, y) \, d\mu(\ld)
            \label{eq: formula of induc sm}.
\end{align}
\end{cor}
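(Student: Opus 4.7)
The plan is to leverage Theorem \ref{thm: con and ind con} and the Kubo-Ando integral representation \eqref{eq: formula of connection} in tandem. First, I would identify the representing function of $\tilde{\sm}$: specializing the defining relation \eqref{eq: con and ind con} with $x=1$ and arbitrary $y=x\in\R^+$ gives
\begin{align*}
	(1\,\tilde{\sm}\, x)I \;=\; I \,\sm\, (xI) \;=\; f(x)I,
\end{align*}
so the representing function of $\tilde{\sm}$ coincides with $f$. Because the Kubo-Ando correspondence between connections and operator monotone functions on $\R^+$ is a bijection (proved in the form in which we will apply it in Theorem \ref{thm: Improve connection}), this already shows that $\sm$ and $\tilde{\sm}$ have the same representing function.

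Next I would transport the integral representation \eqref{eq: formula of f} of $f$ through this correspondence. For the operator side, \eqref{eq: formula of connec} is precisely \eqref{eq: formula of connection} rewritten with the decomposition $\mu=\ap\delta_0+\bt\delta_\infty+\mu|_{(0,\infty)}$, which is the standard Kubo-Ando formula and does not need to be reproved. For the scalar side, the induced connection $\tilde{\sm}$ is itself a connection on $\R^+=B(\C)^+$ (a one-dimensional Hilbert space, for which the same Kubo-Ando theory holds), and since its representing function is also $f$, applying the very same integral representation yields \eqref{eq: formula of induc sm}. Uniqueness of $\mu$ — the claim that both representations use the \emph{same} measure — comes for free because, in each setting, $\mu$ is the unique finite Borel measure on $[0,\infty]$ whose Nevanlinna-type transform is $f$.

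As a reality check (not strictly needed for the proof but clarifying), formula \eqref{prop: induced con_prop} together with \eqref{eq: formula of f} immediately gives
\begin{align*}
	x\,\tilde{\sm}\, y \;=\; x f(y/x) \;=\; \ap x + \bt y + \int_{(0,\infty)} \frac{\ld+1}{2\ld}\, x\bigl(\ld \hm (y/x)\bigr) \, d\mu(\ld),
\end{align*}
and positive homogeneity of the scalar harmonic mean, $x(\ld \hm (y/x)) = (\ld x) \hm y$, recovers \eqref{eq: formula of induc sm} directly.

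I do not expect a substantive obstacle here: the corollary is essentially a translation of Theorem \ref{thm: con and ind con} into the integral-representation language. The only point that requires care is invoking the Kubo-Ando representation \eqref{eq: formula of connection} at both the operator and the scalar levels and matching their measures via the shared representing function $f$; this matching is exactly what the affine order isomorphism in Theorem \ref{thm: con and ind con} (and the bijectivity established in the proof of Theorem \ref{thm: Improve connection}) provides.
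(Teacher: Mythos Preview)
Your proposal is correct and follows essentially the same route as the paper: identify the representing function of $\tilde{\sm}$ via $(1\,\tilde{\sm}\,x)I = I\sm(xI)=f(x)I$, invoke the Kubo-Ando correspondence to conclude that $\sm$ and $\tilde{\sm}$ share $f$ (and hence $\mu$), and then read off the two integral formulas. The paper in fact uses your ``reality check'' computation $x\,\tilde{\sm}\,y = xf(y/x)$ together with $x(\ld \hm (y/x)) = (\ld x)\hm y$ as its primary derivation of \eqref{eq: formula of induc sm}, rather than appealing to Kubo-Ando on $B(\C)^+$; both arguments are valid and you already have both.
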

\begin{proof}
    Let $\sigma$ be a connection and $\tilde{\sigma}$ its induced connection.
    Then the correspondences between connections, induced connections, finite Borel measures and operator monotone functions imply that $\sigma$ and $\tilde{\sigma}$
    have the same representing function and the same representing measure.
    Hence, $\sigma$ has the integral representation \eqref{eq: formula of connec}.
    The formula \eqref{eq: formula of induc sm} of $\tilde{\sigma}$ can be computed by using the formula
    \eqref{prop: induced con_prop}. The direct computation shows that the induced connection of the harmonic mean
    on $B(\HH)^+$ is the scalar harmonic mean.
\end{proof}

\begin{cor} \label{cor: mean on a and mean on R}
    A connection is a mean if and only if the induced connection  is a mean on $\R^+$.
\end{cor}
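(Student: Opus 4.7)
The plan is to observe that this corollary is essentially immediate from the defining relation of the induced connection, once one unwinds what ``mean'' means on each side. Recall that a mean is simply a connection $\sigma$ with $I \sm I = I$, and analogously a scalar connection $\tilde{\sm}$ on $\R^+$ is a mean precisely when $1 \,\tilde{\sm}\, 1 = 1$.

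The bridge is equation \eqref{eq: con and ind con} from Theorem \ref{thm: con and ind con}, which reads $(x \,\tilde{\sm}\, y)I = (xI) \sm (yI)$ for all $x,y \in \R^+$. I would specialize this to $x = y = 1$ to obtain
\begin{equation*}
    (1 \,\tilde{\sm}\, 1)\,I \;=\; I \sm I.
\end{equation*}
From this single identity, both directions fall out: if $\sm$ is a mean, then the right-hand side equals $I$, forcing $1 \,\tilde{\sm}\, 1 = 1$, so $\tilde{\sm}$ is a mean; conversely, if $\tilde{\sm}$ is a mean, then the left-hand side equals $I$, so $I \sm I = I$ and $\sm$ is a mean.

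There is no real obstacle here; the content of the corollary is entirely carried by the defining property of the induced connection, and the argument is a one-line consequence. The only thing to double-check is that the normalization $I \sm I = I$ indeed characterizes means in the operator setting and $1 \,\tilde{\sm}\, 1 = 1$ in the scalar setting, which is immediate from the definition of a mean given in the introduction.
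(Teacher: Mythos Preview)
Your argument is correct and in fact more direct than the paper's. The paper proves this corollary by invoking Corollary~\ref{prop: con and ind con have same formula} (a connection and its induced connection share the same representing function $f$) together with the characterization that a connection is a mean precisely when $f(1)=1$. You bypass the representing function entirely, reading the equivalence straight off the defining relation $(x\,\tilde{\sm}\,y)I = (xI)\sm(yI)$ specialized at $x=y=1$. Your route is shorter and uses strictly less machinery; the paper's route has the minor advantage of fitting into the narrative that everything about $\sigma$ and $\tilde{\sigma}$ is encoded in the common function $f$, but for this particular corollary that detour is unnecessary.
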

\begin{proof}
    Use Corollary \ref{prop: con and ind con have same formula} and the fact that a connection is a mean if and only if
    its representing function is normalized.
\end{proof}

It is easy to see that the class of means on $B(\HH)^+$ becomes a convex set.

\begin{cor} \label{thm: correspond between 4 obj}
    The map $\sigma \mapsto \tilde{\sigma}$ establishes an affine order isomorphism between
    operator means on $B(\HH)^+$ and scalar means on $\R^+$.
\end{cor}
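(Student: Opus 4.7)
The plan is to obtain this corollary as a direct restriction of the affine order isomorphism from Theorem \ref{thm: con and ind con}. The heavy lifting has already been done: that theorem supplies a bijective, order-preserving, affine correspondence between all connections on $B(\HH)^+$ and all connections on $\R^+$, so the task reduces to checking that this bijection sends operator means to scalar means (and vice versa) and that the convex structure on means is preserved.

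First I would apply Corollary \ref{cor: mean on a and mean on R}, which states that $\sigma$ is a mean if and only if $\tilde{\sigma}$ is a mean. Combined with the bijectivity from Theorem \ref{thm: con and ind con}, this yields a bijection between operator means on $B(\HH)^+$ and scalar means on $\R^+$: every scalar mean $\eta$ is the induced connection of some connection $\sigma$ by surjectivity, and by Corollary \ref{cor: mean on a and mean on R} such a $\sigma$ must itself be a mean; injectivity is inherited from the larger map.

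Second, order-preservation for means is immediate since the order on means is simply the restriction of the order on connections. For the affine part, the natural operation on means is convex combination: if $\sigma_1,\sigma_2$ are means and $t \in [0,1]$, then $t\sigma_1+(1-t)\sigma_2$ sends $(I,I)$ to $tI+(1-t)I=I$, so a convex combination of means is again a mean (this is exactly the ``convex set'' remark preceding the statement). Since Theorem \ref{thm: con and ind con} preserves all nonnegative linear combinations, in particular convex combinations, the restricted map $\sigma \mapsto \tilde{\sigma}$ remains affine.

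There is no substantive obstacle here; the statement is essentially a bookkeeping consequence of the results already established. The only subtlety worth stating is the meaning of \emph{affine isomorphism} in the restricted setting, where the domain is a convex set rather than a cone: here it simply means the map preserves convex combinations, which follows automatically from the stronger preservation of nonnegative linear combinations supplied by Theorem \ref{thm: con and ind con}.
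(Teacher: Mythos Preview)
Your proposal is correct and follows exactly the same approach as the paper: the paper's proof simply says the result is an immediate consequence of Theorem~\ref{thm: con and ind con} and Corollary~\ref{cor: mean on a and mean on R}, and your argument is a faithful (more detailed) unpacking of that.
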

\begin{proof}
    It is an immediate consequence of Theorem \ref{thm: con and ind con} and
    Corollary \ref{cor: mean on a and mean on R}.
\end{proof}


\begin{remk}
     According to Corollary \ref{thm: correspond between 4 obj}, we can naturally define any named means on $B(\HH)^+$ to be the corresponding means on $\R^+$.
\end{remk}

\begin{ex}
    For each $p \in [-1,1]$ and $\ap \in [0,1]$, the map
\[ x \mapsto [(1-\ap)+\ap x^p]^{1/p} \]
is an operator monotone function on $\R^+$ (when $p=0$, it is understood that we take limit as $p$ approaches $0$).
Hence, it produces a mean on $\R^+$, given by
\begin{align*}
    x \,\#_{p,\ap}\, y = [(1-\ap)x^p + \ap y^p]^{1/p}.
\end{align*}
This is the formula of the \emph{quasi-arithmetic power mean} with parameter $(p,\ap)$.
Now, we define the \emph{quasi-arithmetic power mean} for positive operators $A,B$ on $\HH$
to be the mean on $B(\HH)^+$ corresponds to this scalar mean.
The class of quasi-arithmetic power means contains many kinds of means: The mean $\#_{1,\ap}$ is the $\ap$-weighed arithmetic mean. The case $\#_{0,\ap}$ is the $\ap$-weighed geometric mean. 
The case $\#_{-1,\ap}$ is
the $\ap$-weighed harmonic mean. The mean $\#_{p,1/2}$ is the power mean of order $p$.
\end{ex}

\noindent	\textbf{Acknowledgements}:
The first author is indebted to a financial support from the Chulalongkorn University Graduate School for the 
 Graduate Scholarship to Commemorate the 72nd Anniversary of His Majesty
King Bhumibol Adulyadej and the 90th Anniversary of Chulalongkorn University Fund 
(Ratchadaphiseksomphot Endowment Fund) during his Ph.D. study.

\end{document}